\newtheorem{theorem}{Theorem}[section]
\newtheorem{observation}{Observation}[section]
\newtheorem{lemma}{Lemma}[section]
\newtheorem{definition}{Definition}[section]
\newtheorem{problem}{Problem}[section]
\newtheorem{construction}{Construction}[section]
\begin{document}

\title{Directed paths: from Ramsey to Ruzsa and Szemer\'edi}

\author{ Po-Shen Loh \thanks{Department of Mathematical
Sciences, Carnegie Mellon University, Pittsburgh, PA 15213. E-mail:
ploh@cmu.edu. Research supported in part by NSF grants DMS-1041500,
DMS-1201380, and DMS-1455125, and by a USA-Israel BSF Grant.} }

\date{}

\maketitle

\begin{abstract}
  Starting from an innocent Ramsey-theoretic question regarding directed
  paths in tournaments, we discover a series of rich and surprising
  connections that lead into the theory around a fundamental problem in
  Combinatorics: the Ruzsa-Szemer\'edi induced matching problem.  Using
  these relationships, we prove that every coloring of the edges of the
  transitive $n$-vertex tournament using three colors contains a directed
  path of length at least $\sqrt{n} \cdot e^{\log^* n}$ which entirely
  avoids some color.  We also expose connections to a family of
  constructions for Ramsey tournaments, and introduce and resolve some
  natural generalizations of the Ruzsa-Szemer\'edi problem which we
  encounter through our investigation.
\end{abstract}

\section{Introduction}

Ramsey theory is a central area in Combinatorics which concerns structure
that must exist in arbitrary (but large) configurations.  The classical
starting point for Graph Ramsey Theory considers the number of vertices
required in order for every 2-edge-coloring of a sufficiently large
complete graph to contain a monochromatic copy of a fixed complete graph
$K_t$ \cite{ErdSze, Ramsey}.  Over the century, many other target subgraphs
were considered, from the specific, such as paths \cite{GerGya}, to trees
and cliques \cite{Chvatal}, to entire classes such as graphs with bounded
degree \cite{ChvRodSzeTro, ConFoxSud, GraRodRuc} and degeneracy
\cite{BurErd}.

This paper explores an interesting Ramsey-type problem about paths, and so
we briefly outline some of the history.  In undirected graphs, an old
result of Gerencs\'er and Gy\'arf\'as \cite{GerGya} established that the
corresponding 2-color Ramsey number of an $n$-vertex path is precisely
$\lfloor \frac{3n-2}{2} \rfloor$.  When the number of colors increases
beyond two, however, less is known.  The result for three colors was proved
asymptotically by Figaj and \L uczak \cite{FigLuc}, and recently for large
$n$ by Gy\'arf\'as, Ruszink\'o, S\'ark\"ozy, and Szemer\'edi
\cite{GyaRusSarSze}.

Another flavor of Ramsey-type results, which also inspires the work in this
paper, stems from the following celebrated theorem of Erd\H{o}s and
Szekeres \cite{ErdSze}: every sequence of $n$ distinct real numbers
contains a monotone subsequence of length at least $\sqrt{n}$.  This is
often proven by the pigeonhole principle.  Indeed, assign to each number an
ordered pair $(x, y)$, where $x$ tracks the length of the longest
increasing subsequence ending at that number, and $y$ tracks the decreasing
analogue.  Since the numbers are distinct, it is easy to see that the
ordered pairs must also be distinct, and therefore some ordered pair has an
element at least $\sqrt{n}$.  Many extensions have been found for this
theorem (see, e.g., any of \cite{FoxPacSudSuk, Kalmanson, MosSha, Steele,
SzaTar}).

The same pigeonhole argument also proves the following straightforward
result.  Consider the transitive tournament on the vertex set $\{1, \ldots,
n\}$, where the edge between $i < j$ is oriented in the direction
$\overrightarrow{ij}$.  Then, every 2-coloring of the edges of $T_n$ has a
monochromatic (directed) path with at least $\sqrt{n}$ vertices (hereafter
called the \emph{vertex-length}). The generalization to monochromatic paths
in with $r$-colored tournaments (with sharp lower bound $n^{1/r}$) was
independently rediscovered multiple times \cite{Alo, Chv, GyaLeh}.  This
problem can also be restated in terms of the \emph{ordered Ramsey number}\/
of a path; see, e.g., any of \cite{BalCibKraKyn, ConFoxLeeSud,
FoxPacSudSuk, MilSchWes, MosSha} for a discussion of recent work in ordered
Ramsey Theory, several of which consider paths (in the context of
hypergraphs).

This paper discusses several questions that start from this classical
argument, leading to surprising and interesting connections across
Combinatorics.  This serendipitous tour passes through directed graph
Ramsey theory, to $k$-majority tournaments (applied to create constructions
for Ramsey tournaments), and ultimately to the borders of a foundational
problem of Ruzsa and Szemer\'edi on induced matchings.

Indeed, with two colors, the concept of ``monochromatic'' is unambiguous.
With three or more colors, however, there is another natural generalization
of the basic concept: one can seek substructures which are
\emph{1-color-avoiding}, i.e., in which there is at least one color which
does not appear at all.  (In the 2-color setting, these two concepts are
identical.)  Already in the 3-color setting, this problem is not
well-understood.

\begin{problem}
  \label{prb:3color-path}
  Determine $f(n)$, the maximum number such that every 3-coloring of the
  edges of the $n$-vertex transitive tournament contains a directed path
  with at least $f(n)$ vertices, which avoids at least one of the colors.
\end{problem}

By merging two of the three color classes, one may obtain a bound $f(n)
\geq \sqrt{n}$ using the 2-color result, and the following standard
construction achieves $f(n) \leq n^{2/3}$ when $n$ is a perfect cube:
identify the vertex set with the triples $\{1, \ldots, n\}^3$, ordered
lexicographically, and color an edge $(x, y, z) \rightarrow (x', y', z')$
based upon the leftmost position at which the triples differ.  The best
result, however, is still unknown.

It turns out that there are interesting connections between this problem
and a longstanding open question which traces back to a result of Ruzsa and
Szemer\'edi \cite{RuzSze} on induced matchings, which has rich connections
to Szemer\'edi's Regularity Lemma (see, e.g., the survey
\cite{KomShoSimSze}).

\begin{problem}
  \label{prb:RSz}
  Let $M(n, k)$ be the maximum number of edges in an $n$-vertex graph $G =
  (V, E)$ whose edge set is the union of $k$ induced matchings.  That is,
  $E = E_1 \cup \ldots \cup E_k$, where each $E_i$ is precisely the set of
  edges induced by $G[V_i]$ for some subset $V_i \subset V$, and each $E_i$
  is a matching.  Determine the asymptotics of $M(n, k)$.
\end{problem}

Leveraging this connection, and using the current best bounds on Problem
\ref{prb:RSz} (due to Fox's \cite{Fox} result on the Triangle Removal
Lemma), we obtain a non-trivial lower bound on $f(n)$.  Here, $\log^*$ is
the iterated logarithm, or the inverse of the tower function $T(n) =
2^{T(n-1)}$, $T(0) = 1$.  Also, we write $f(n) = \Omega(g(n))$ to mean that
there exists a positive constant $c$ such that $f(n) \geq c g(n)$ for all
sufficiently large $n$.

\begin{theorem}
  Every 3-coloring of the edges of the $n$-vertex transitive tournament
  contains a directed path of length at least $\Omega(\sqrt{n} \cdot
  e^{\log^* n})$, which is missing at least one color.
  \label{thm:f-lower}
\end{theorem}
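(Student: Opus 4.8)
The plan is to argue by contradiction. Suppose that for $L = c\sqrt n\, e^{\log^* n}$ with a suitably small constant $c$ there is no $1$-color-avoiding path on $L$ vertices; equivalently, for each of the three pairs of colors the longest path using only those two colors has fewer than $L$ vertices, and in particular each monochromatic path has fewer than $L$ vertices. I will deduce from this that $n$ must be bounded in a way that is impossible for large $n$. As a warm-up that exposes where the extra room lives, one re-derives the classical $\sqrt n$ bound as follows. For a vertex $v$ let $\phi(v)$ be the number of vertices on the longest path ending at $v$ that uses only colors $1$ and $2$, and let $\gamma(v)$ be the analogous quantity for monochromatic color-$3$ paths. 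Each edge $u\to v$ is colored $1$, $2$, or $3$; in the first two cases it strictly increases $\phi$ and in the last it strictly increases $\gamma$, so $v\mapsto(\phi(v),\gamma(v))$ is injective, and since both coordinates lie in $\{1,\dots,L-1\}$ this already forces $n<L^2$. Two structural facts drop out for free: each level $U_i:=\phi^{-1}(i)$ induces a monochromatic clique in color $3$ (a color-$1$ or color-$2$ edge inside $U_i$ would raise $\phi$), so $|U_i|<L$; and each color-$3$ level $\gamma^{-1}(\ell)$ carries only colors $1$ and $2$. The same discussion applied to the two other ways of singling out a color yields analogous systems of level partitions, and, crucially, the hypothesis also bounds every \emph{mixed} two-colored path.

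The improvement comes from showing that, once all three pairwise path bounds are imposed simultaneously, the injective map above cannot have a nearly full image. From the bad coloring I would extract an auxiliary graph $H$ on $O(L)$ vertices — morally, one vertex per level $U_i$ and per color-$3$ level, with one edge $\{\phi(v),\gamma(v)\}$ per original vertex $v$ (these $n$ edges are distinct by injectivity). The two claims to establish are: (i) $H$ has $\Omega(n)$ edges; and (ii) using the color-$3$-clique structure of the $U_i$'s together with the bounds on the $\{1,3\}$- and $\{2,3\}$-paths, the edge set of $H$ decomposes into few \emph{induced} matchings — more precisely, into the kind of structure governed by Problem~\ref{prb:RSz}, or rather by the generalization of it that we isolate. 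With (i) and (ii) in hand, the quantitative Ruzsa--Szemer\'edi bound that follows from Fox's Triangle Removal Lemma gives $n = O\!\big(L^2/e^{\Omega(\log^* L)}\big)$; since $\sqrt n \le L \le n$ we have $\log^* L = \log^* n + O(1)$, so $L = \Omega\!\big(\sqrt n\, e^{\Omega(\log^* n)}\big)$, contradicting the choice of $L$ once $c$ is small enough. This establishes Theorem~\ref{thm:f-lower}.

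I expect step (ii) to be the crux. Naive groupings of the edges of $H$ — by $\phi$, by $\gamma$, or by a linear combination such as $\phi\pm\gamma$ — either fail to be induced, because a stray ``diagonal'' edge between two matched pairs may be realized by a third vertex $w$, or already require $\Theta(L)$ classes, which merely reproves $n<L^2$. Overcoming this is exactly where the hypothesis that \emph{every} pairwise two-colored path is short must be used, to repeatedly clean up and merge classes; this iterative clean-up is, I expect, what produces the iterated-logarithm factor, mirroring the structure of Fox's argument. The remaining points are routine: $n$ need not be a perfect square, the various level sets interleave in the linear order (so one treats them as unordered classes), and one must absorb additive constants inside $\log^*$ — none of which affect the shape of the bound.
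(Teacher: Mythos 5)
Your overall strategy---encode each tournament vertex by path statistics, view these encodings as the edges of an auxiliary bipartite graph with $O(L)$ vertices per side, decompose that edge set into few induced matchings, and then invoke Fox's removal-lemma bound on Problem~\ref{prb:RSz} to contradict $n$ being too large---is exactly the shape of the paper's argument. But the step you defer as ``the crux,'' namely (ii), is precisely the content of the proof, and your setup does not supply it. The paper's choice of statistics is different from yours and is what makes (ii) a short verification rather than an open-ended clean-up: to each vertex $v$ one attaches the \emph{three} numbers $(x_v,y_v,z_v)$, the vertex-lengths of the longest paths ending at $v$ that avoid color 1, color 2, and color 3 respectively (this is Lemma~\ref{lem:fF-equiv}, reformulating the problem as Problem~\ref{prb:triples}). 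For any $i<j$ the edge $\overrightarrow{ij}$ has some color, and the two coordinates corresponding to the \emph{other} two colors strictly increase, so every pair of triples increases in at least two coordinates. One then takes the bipartite graph with an edge $a_{x_v}b_{y_v}$ \emph{labeled} $z_v$ for each vertex $v$, and a three-triple case analysis (Lemma~\ref{lem:triples=>IM}) shows each label class is an induced matching, giving $n\le F^{-1}$-type control via $M(2L,L)$ directly. Your two statistics $\phi$ (longest $\{1,2\}$-colored path) and $\gamma$ (longest monochromatic color-3 path) give injectivity and hence the classical $\sqrt n$ bound, but you never specify the matching decomposition of $H$, and your own discussion concedes that the natural groupings by $\phi$, $\gamma$, or $\phi\pm\gamma$ fail to be induced or are too numerous. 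Nothing in your proposal replaces the missing third statistic that serves as the matching label.

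A second, related misconception: you expect an ``iterative clean-up mirroring Fox's argument'' to be needed inside the reduction and to be what ``produces the iterated-logarithm factor.'' In the actual proof no such iteration occurs and the reduction produces no gain at all beyond the trivial $n\le L^2$; the entire factor $e^{\log^* n}$ is imported as a black box from Fox's bound $M(2n,n)\le O(n^2/e^{\log^* n})$ on the Ruzsa--Szemer\'edi quantity, once the induced-matching structure has been exhibited. So the fix is concrete: replace $(\phi,\gamma)$ by the triple of 1-color-avoiding path lengths, label each edge of the auxiliary bipartite graph by the third coordinate, prove the induced-matching property by the pairwise-comparison argument of Lemma~\ref{lem:triples=>IM}, and then quote Fox. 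Your final quantitative step ($\log^* L=\log^* n+O(1)$, absorbing constants) is fine once that structural step is in place.
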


Furthermore, we discover that Problem \ref{prb:3color-path} is related to
an ``ordered'' variant of the following generalization of the
Ruzsa-Szemer\'edi problem.

\begin{problem}
  Let $M_l(n, k)$ be the maximum number of edges in a $n$-vertex bipartite
  graph $G = (V, E)$ whose edge set is the union of $k$ matchings which are
  \textbf{$\boldsymbol{l}$-separated}.  That is, $E = E_1 \cup \ldots \cup
  E_k$, where each $E_i$ is a matching, and there is no sequence of
  vertices $a_0, a_1, \ldots, a_t$ with $t \leq l$, such that $a_0$ and
  $a_t$ are both incident to distinct edges from the same $E_i$, and each
  of $a_0 a_1$, $a_1 a_2$, \ldots, $a_{t-1} a_t$ are edges of $E$.
  Determine the asymptotics of $M_l(n, k)$.
  \label{prb:RSz-gen}
\end{problem}

Note that when $l = 1$, this corresponds to the induced matchings from
Problem \ref{prb:RSz}.  In our application, we are most interested in the
symmetric bipartite case, which corresponds to $M_l(2n, n)$.  We completely
resolve the unordered version stated above.  It turns out that already for
$l=2$, the exponent immediately drops, and remains level for all higher
$l$, so that the unique behavior is special to $l=1$.

\begin{theorem}
  For all positive integers $l \geq 2$ and $n$, we have $M_l(2n, n) =
  (1+o(1)) n^{3/2}$.
  \label{thm:floppy-sigma}
\end{theorem}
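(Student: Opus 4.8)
The plan is to prove matching bounds $(1-o(1))n^{3/2}\le M_l(2n,n)\le n^{3/2}$ for every $l\ge 2$. Since $M_l(2n,n)$ is clearly non-increasing in $l$, it suffices to establish the upper bound in the single case $l=2$, and for the lower bound to exhibit one construction that is simultaneously $l$-separated for \emph{every} $l$.

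\textbf{Lower bound.} I would take $G$ to be the disjoint union of $m:=\lfloor\sqrt n\rfloor$ copies of the complete bipartite graph $K_{m,m}$, plus $2n-2m^2\ge 0$ isolated vertices (split arbitrarily between the two sides) so that $|V(G)|=2n$. Inside each copy, assign its $m^2$ edges injectively to matchings $E_1,\dots,E_n$; this is possible since $m^2\le n$. Distinct copies are vertex-disjoint and each copy contributes at most one edge to any $E_i$, so every $E_i$ is globally a matching. Moreover any sequence $a_0,a_1,\dots,a_t$ with $a_0a_1,\dots,a_{t-1}a_t\in E(G)$ lives inside a single copy of $K_{m,m}$, and that copy contains at most one edge of $E_i$, so $a_0$ and $a_t$ cannot be incident to two distinct edges of the same $E_i$. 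Hence $G$ is $l$-separated for every $l$, and $|E(G)|=m^3=(1-o(1))n^{3/2}$.

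\textbf{Upper bound.} Fix an $l$-separated decomposition $E(G)=E_1\cup\dots\cup E_n$ with $l\ge 2$, where $G$ is bipartite with parts $A$ and $B$, $|A|+|B|=2n$. Call $a\in A$ an \emph{$E_i$-vertex} if some edge of $E_i$ is incident to $a$. The key point, which uses only the forbidden configuration with $t=2$, is that for every $b\in B$ and every $i$, at most one neighbour of $b$ is an $E_i$-vertex: two distinct such neighbours $a,a'$ would yield the sequence $a,b,a'$, whose ends $a,a'$ are incident to the edge of $E_i$ at $a$ and the edge of $E_i$ at $a'$ — distinct, since $E_i$ is a matching and $a\ne a'$ — while $ab,ba'\in E(G)$, a violation. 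Next, because each $E_i$ contains at most one edge incident to a given $a$, one can injectively assign to each of the $\deg(a)$ edges at $a$ a matching containing it, so $a$ is an $E_i$-vertex for at least $\deg(a)$ indices $i$. Combining these, for every $b\in B$ one gets
\[
  \sum_{a\in N(b)}\deg(a)\ \le\ \#\{\,(a,i): a\in N(b)\text{ is an }E_i\text{-vertex}\,\}\ \le\ n ,
\]
the last inequality because there are $n$ indices $i$ and at most one $E_i$-vertex among $N(b)$ for each. Summing over $b\in B$ gives $\sum_{a\in A}\deg(a)^2=\sum_{b\in B}\sum_{a\in N(b)}\deg(a)\le |B|\,n$, and then Cauchy--Schwarz yields
\[
  |E(G)|^2=\Bigl(\sum_{a\in A}\deg(a)\Bigr)^2\ \le\ |A|\sum_{a\in A}\deg(a)^2\ \le\ |A|\,|B|\,n\ \le\ n^3 ,
\]
using $|A|\,|B|\le\bigl((|A|+|B|)/2\bigr)^2=n^2$. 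Hence $|E(G)|\le n^{3/2}$.

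Putting the two bounds together proves $M_l(2n,n)=(1+o(1))n^{3/2}$ for all $l\ge 2$. I do not anticipate a serious obstacle: the argument is essentially the one double-count above, and the only care needed is in the bookkeeping for a possibly unbalanced bipartition and for non-square $n$ (both absorbed into the $o(1)$ of the lower bound), and in verifying that the two $E_i$-edges appearing in the $t=2$ step are genuinely distinct (which is where bipartiteness and the matching property are used).
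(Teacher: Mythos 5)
Your proof is correct and follows essentially the same route as the paper: the lower bound is the identical construction (disjoint copies of $K_{\sqrt n,\sqrt n}$ with one edge of each component per matching, with your floor/isolated-vertex bookkeeping handling non-square $n$), and the upper bound is the same double count of two-edge walks based on the observation that the neighbours of any vertex meet distinct matchings. The only cosmetic difference is that you count walks from one side and finish with Cauchy--Schwarz and $|A|\,|B|\le n^2$, whereas the paper sums over all $2n$ vertices and applies convexity of the degree squares.
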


Finally, we show that the same bound on the ordered variant (formulated in
Section \ref{sec:RSz-gen-ord}) would improve Theorem \ref{thm:f-lower} to
achieve the asymptotically sharp bound $f(n) \geq \Omega(n^{2/3})$.

\bigskip

\section{Analysis}
\label{sec:1-avoid}

We start with directed paths that avoid at least one color.  In this
setting, the result is not even known for transitive tournaments, and so we
focus on that case here.  Recall that $f(N)$ is defined as the maximum
number $n$ such that every 3-coloring of the edges of the transitive
$N$-vertex tournament contains a directed path of vertex-length at least
$n$, which avoids at least one of the colors.

\subsection{Reformulation and $\boldsymbol{k}$-majority tournaments}

We begin by stating an equivalent and independently natural problem.  The
following statement is more convenient for relating with $k$-majority
tournaments, and this connection will be detailed at the end of this
section.

\begin{problem}
  \label{prb:triples}
  Determine $F(n)$, the maximum number of triples in a sequence $L_1, L_2,
  \ldots$ such that every coordinate is an integer between 1 and $n$
  inclusive, and for every $i < j$, there are at least two coordinates in
  which $L_i$ is strictly less than $L_j$.
\end{problem}

\noindent \textbf{Example.} The sequence $(3, 2, 7)$, $(7, 3, 2)$, $(1, 8,
8)$, $(2, 9, 9)$, \ldots\  is valid.  Indeed, when comparing the second and
fourth triples, note that although the first coordinate decreases from 7 to
2, the second and third coordinates both increase, from 3 to 8 and 2 to 8,
respectively.

\medskip

It turns out that Problem \ref{prb:3color-path} (Ramsey for
1-color-avoiding paths) and Problem \ref{prb:triples} (above) are
completely equivalent, as established by the following lemma.

\begin{lemma}
  For any positive integers $n$ and $N$: 
  \begin{description}
    \item[(i)] if $f(N) \leq n$, then $F(n) \geq N$; and
    \item[(ii)] if $f(N) > n$, then $F(n) < N$; and thus
    \item[(iii)] the values of $F$ are precisely the arguments at which $f$
      increases: $F(n) = N$ if and only if $f(N) = n$ and $f(N+1) > n$.
  \end{description}
  \label{lem:fF-equiv}
\end{lemma}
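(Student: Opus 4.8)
The plan is to exhibit an explicit two-way translation between $3$-colorings of $T_N$ and valid triple-sequences of length $N$, by identifying each of the three colors with the $2$-element subset of $\{1,2,3\}$ that \emph{excludes} it, and by indexing the three coordinates of a triple by these same three $2$-subsets $S\in\{\{1,2\},\{1,3\},\{2,3\}\}$.

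For part (i): given a $3$-coloring $\chi$ of $T_N$ witnessing $f(N)\le n$ (so every $1$-color-avoiding directed path has vertex-length at most $n$), assign to each vertex $v$ the triple $L_v$ whose $S$-coordinate is the vertex-length of the longest directed path ending at $v$ that uses only edges whose colors lie in $S$. A single vertex is such a path, and any path using only colors from $S$ avoids the third color, hence has length at most $n$; so every coordinate lies in $\{1,\dots,n\}$. If $i<j$, the edge $\overrightarrow{ij}$ has a color $c$ lying in exactly the two subsets $S\ni c$, and appending $\overrightarrow{ij}$ to an optimal $S$-restricted path ending at $i$ shows that for each of those two subsets the $S$-coordinate of $L_j$ strictly exceeds that of $L_i$. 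Thus $L_1,\dots,L_N$ is a valid sequence for parameter $n$, giving $F(n)\ge N$.

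For part (ii) I would prove the contrapositive $F(n)\ge N\Rightarrow f(N)\le n$. Given a valid sequence $L_1,\dots,L_N$ for parameter $n$, build a $3$-coloring of $T_N$ as follows: for each $i<j$ at most one coordinate fails to strictly increase from $L_i$ to $L_j$, so pick a subset $S_0(i,j)$ indexing such a coordinate (arbitrary if all increase) and color $\overrightarrow{ij}$ with the unique color $c\notin S_0(i,j)$. Now consider any directed path $v_1\to\cdots\to v_m$ avoiding some color $c^*$, and set $S^*=\{1,2,3\}\setminus\{c^*\}$. Every edge $v_kv_{k+1}$ has a color $c\in S^*$, hence $c\ne c^*$, hence its chosen subset $S_0(v_k,v_{k+1})=\{1,2,3\}\setminus\{c\}\ne S^*$; since only that subset's coordinate can fail to increase, the $S^*$-coordinate strictly increases across every edge. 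So the $S^*$-coordinates of $L_{v_1},\dots,L_{v_m}$ form a strictly increasing sequence inside $\{1,\dots,n\}$, forcing $m\le n$. Thus this coloring witnesses $f(N)\le n$, proving $f(N)>n\Rightarrow F(n)<N$.

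For part (iii): combining (i) with the contrapositive from (ii) yields the equivalence $F(n)\ge N\iff f(N)\le n$; since $f$ is nondecreasing (restrict a coloring of $T_{N+1}$ to its first $N$ vertices) and $f(N)\ge\sqrt N\to\infty$, the set $\{N:f(N)\le n\}$ is a finite nonempty initial segment whose maximum is exactly $F(n)$, so $F(n)=N$ iff $f(N)\le n$ and $f(N+1)>n$. To upgrade ``$f(N)\le n$'' to ``$f(N)=n$'' I would note $f(N+1)\le f(N)+1$: take a coloring of $T_N$ witnessing $f(N)$, add a new maximal vertex, and color every edge into it with color $1$; a $1$-color-avoiding path either lies in $T_N$ (length at most $f(N)$) or ends at the new vertex, in which case—its last edge being color $1$—it avoids color $2$ or $3$, so deleting the new vertex gives such a path in $T_N$, of length at most $f(N)$, and the whole path has length at most $f(N)+1$. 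Then $f(N)\le n<f(N+1)\le f(N)+1$ forces $f(N)=n$ (and $f(N+1)=n+1$), while the reverse implication is immediate from the displayed equivalence. The one place demanding care is the color-to-complement bookkeeping in (ii): one must check that the single coordinate allowed to misbehave on an edge is never the coordinate $S^*$ carrying the Erd\H{o}s--Szekeres-type increasing chain; everything else is routine.
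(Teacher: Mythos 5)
Your proposal is correct and follows essentially the same route as the paper: part (i) is the identical Erd\H{o}s--Szekeres-style assignment of path-length triples, part (ii) uses the same edge-coloring rule (color each edge by the one coordinate that is allowed not to increase), and part (iii) rests on the same facts that $f$ is nondecreasing, unbounded, and increases by at most one per step. The only difference is cosmetic: in (ii) you argue directly, in contrapositive form, that the coordinate corresponding to the avoided color strictly increases along any $1$-color-avoiding path, whereas the paper reaches the same conclusion by a dynamic-programming comparison of the computed triples $T_k$ with the given triples $L_k$.
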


\begin{proof}
  For part (i), $f(N) \leq n$ means that there is a 3-coloring of the edges
  of the transitive tournament on vertex set $\{1, \ldots N\}$, with all
  1-color-avoiding paths having vertex-length at most $n$.  Consider this
  coloring, and follow the Erd\H{o}s-Szekeres proof.  Associate a triple of
  numbers to each vertex, with $L_k = (x_k, y_k, z_k)$ corresponding to the
  $k$-th vertex, where $x_k$ is the number of vertices in the longest path
  that ends at the $k$-th vertex and avoids the first color, and $y_k$ and
  $z_k$ are the corresponding statistics for avoiding the second and third
  colors, respectively.
  
  Consider any two triples $L_i$ and $L_j$ with $i < j$.  Since there are
  three colors overall, there are two colors which differ from the color of
  edge $\overrightarrow{ij}$.  Let $c$ be one of them.  Then, the longest
  path which avoids color $c$ and ends at the $i$-th vertex can be extended
  by edge $\overrightarrow{ij}$, to end at $j$.  Therefore, the
  corresponding coordinate (for color $c$) of $L_i$ must be strictly less
  than that of $L_j$.  This holds also for the other color which differs
  from $\overrightarrow{ij}$, and so $L_i$ is strictly less than $L_j$ in
  at least two coordinates.  We have verified the necessary properties
  produce a valid construction for Problem \ref{prb:triples} with $N$
  triples, all of whose entries are at most $n$.  This implies $F(n) \geq
  N$.

  To prove (ii), fix an arbitrary sequence of triples $L_1, L_2, \ldots,
  L_N$ with the property that for every $i < j$, there are at least two
  coordinates in which $L_i$ is strictly less than $L_j$, and all
  coordinates are positive integers.  We will show that some triple
  contains an integer strictly greater than $n$.  Indeed, let us construct
  a 3-edge-coloring of a transitive tournament on vertex set $\{1, \ldots,
  N\}$, where edges are directed in the natural forward increasing
  ordering.  For each pair $i < j$, there is always a choice of one of the
  three coordinates such that $L_i$ is strictly less than $L_j$ in the
  other two coordinates.  (If $L_i$ is strictly less than $L_j$ in all
  three coordinates, then there are three choices for this coordinate;
  disambiguate by selecting the first coordinate.) Use this coordinate as
  the color of the edge.  Since $f(N) > n$ by assumption, there is a
  1-color-avoiding directed path with more than $n$ vertices.

  At the same time, the following dynamic programming algorithm computes
  the length of the longest 1-color-avoiding directed path in a transitive
  tournament.  Iteratively construct triples $T_1, \ldots, T_N$, where each
  $T_k = (x_k, y_k, z_k)$ records the vertex-lengths of the longest
  1-color-avoiding paths ending at the $k$-th vertex, avoiding the first,
  second, and third color, respectively.  This can be achieved by starting
  with $T_1 = (1, 1, 1)$, and at each step, given $T_1, \ldots, T_{k-1}$,
  constructing $T_k$ by setting
  \begin{align*}
    x_k &= 1 + \max\{ x_j : 1 \leq j < k \text{ and }
    \overrightarrow{jk} \text{ not in color 1} \} \\
    y_k &= 1 + \max\{ y_j : 1 \leq j < k \text{ and }
    \overrightarrow{jk} \text{ not in color 2} \} \\
    z_k &= 1 + \max\{ z_j : 1 \leq j < k \text{ and }
    \overrightarrow{jk} \text{ not in color 3} \} ,
  \end{align*}
  with the convention that the maximum is taken to be 0 if it is taken over
  the empty set.  (This is actually consistent with the definition of
  $T_1$.)

  By construction of the tournament coloring, we inductively have that for
  each $k$, the triple $T_k$ is less than or equal to the given triple
  $L_k$ in every coordinate.  Since there is a 1-color-avoiding directed
  path with more than $n$ vertices, this implies that some entry of some
  $L_k$ exceeds $n$, as claimed.
  
  The final statement (iii) hinges on two observations: the sequence $f(1),
  f(2), f(3), \ldots$ is monotonic, and consecutive terms increase by at
  most one.  To see that the latter is true, consider an arbitrary
  3-edge-coloring of the transitive $N$-vertex tournament, and extend it to
  a 3-edge-coloring of the transitive $(N+1)$-vertex tournament by coloring
  all edges to the $(N+1)$-st vertex in the first color.  This would then
  contain a 1-color-avoiding path of vertex-length at least $f(N+1)$, which
  interacts with the $(N+1)$-st vertex in its final vertex, if at all.
  Therefore, the original 3-edge-coloring of the $N$-vertex tournament
  contained a 1-color-avoiding path of vertex-length at least $f(N+1) - 1$,
  hence $f(N) \geq f(N+1) - 1$, as claimed.

  Now suppose that $f(N) = n$ and $f(N+1) > n$.  By (i), we have $F(n) \geq
  N$, and by (ii), we have $F(n) < N+1$.  Since $F(n)$ is an integer, this
  implies $F(n) = N$.  Furthermore, since we have established that $f(1),
  f(2), \ldots$ is monotonic with consecutive terms increasing by at most
  one, and we also know that $f(1) = 1$ and the sequence is unbounded, this
  determines $F(n)$ for every positive integer $n$.  Since each $F(n)$ can
  only take one value, this establishes both directions of the final
  if-and-only-if statement.
\end{proof}

\bigskip

Problem \ref{prb:triples} is independently interesting in the above
formulation, as it connects to the area of $k$-majority tournaments.

\begin{definition}
  Given a set of $k$ linear orders on a common ground set $X$ (with $k$
  odd), the corresponding \textbf{$\boldsymbol{k}$-majority tournament} has
  vertex set $X$, with an edge directed from $x$ to $y$ if and only if a
  majority of the $k$ orders rank $x < y$.
\end{definition}

These types of tournaments have been actively studied in social science
\cite{McGarvey}, as well as in Combinatorics, where previous researchers
studied extremal problems regarding the sizes of dominating sets
\cite{AloBriKieKosWin} and transitive subtournaments \cite{MilSchWes}.
Problem \ref{prb:triples} is connected to bounding transitive
subtournaments in a natural class of constructions of $k$-majority
tournaments. Indeed, the author actually came upon the study of this
problem in the course of analyzing the following construction, which may
potentially be of independent interest.

\begin{construction}
  \label{cons:ramsey}
  Let $n$ and $k$ be positive integers, and let $\sigma_1, \ldots,
  \sigma_k$, be permutations of $\{1, \ldots, k\}$, such that $\sigma_i(1)
  = i$ for each $i$.  Define the vertex set $X = \{1, \ldots, n\}^k$ to be
  the collection of $k$-tuples of integers between 1 and $n$ inclusive.
  For each of the $k$ permutations $\sigma_i$, construct a linear order on
  $X$ by taking the lexicographical order where $\sigma_i(1)$ is the most
  significant coordinate, $\sigma_i(2)$ is the next most significant, and
  so on.  Finally, construct a $k$-majority tournament from these
  permutations.
\end{construction}

The central problem in constructing Ramsey tournaments is to avoid large
transitive subtournaments.  Strictly speaking, to estimate the size of the
largest transitive subtournament in this class of construction, one must
consider all elements of each permutation, because ties need to be broken.
However, when $n$ is large, the main asymptotic may be dictated by
transitive subtournaments which are entirely resolved by the first element
of every permutation (with no ties).  This type of transitive subtournament
is then a sequence of $k$-tuples in which every pair in the sequence
strictly increases in a majority of the coordinates.  When $k=3$, this is
precisely Problem \ref{prb:triples}.

\subsection{Induced matchings and Ruzsa-Szemer\'edi}

In light of the previous section, it suffices to focus on Problem
\ref{prb:triples}, which asks to upper-bound the lengths of sequences of
triples $L_1, L_2, \ldots$ from $\{1, 2, \ldots, n\}^3$, in which every
pair of triples increases in at least two coordinates.  The following
trivial observation implies that the sequence length is at most $n^2$.

\begin{observation}
  \label{obs:projection}
  For each fixed $x^*, y^*$, the sequence has at most one triple of the
  form $(x^*, y^*, z)$.
\end{observation}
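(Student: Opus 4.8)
The plan is to argue by contradiction using only the defining property of the sequence. Suppose the sequence contained two triples of the form $(x^*, y^*, z)$, say $L_i = (x^*, y^*, z_i)$ and $L_j = (x^*, y^*, z_j)$ with $i < j$. By the requirement in Problem~\ref{prb:triples}, there must be at least two coordinates in which $L_i$ is strictly less than $L_j$. But the first coordinates of $L_i$ and $L_j$ are both equal to $x^*$, and likewise the second coordinates are both equal to $y^*$, so the strict inequality can hold in neither of the first two coordinates. Hence it can hold in at most one coordinate, namely the third, contradicting the requirement. Therefore at most one triple can have any prescribed pair of values in its first two coordinates.

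For the stated consequence that the sequence has length at most $n^2$, observe that there are exactly $n^2$ possible pairs $(x^*, y^*) \in \{1, \ldots, n\}^2$, and by the above each such pair occurs as the first two coordinates of at most one triple; summing over all pairs bounds the total number of triples by $n^2$.

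I do not expect any genuine obstacle here — the entire content is the remark that equality in a coordinate cannot contribute to the count of coordinates in which one triple strictly dominates another. The only point worth a moment's care is the direction of the comparison: the defining property concerns pairs $i < j$ with $L_i$ strictly below $L_j$, but since agreement in the first two coordinates defeats a strict inequality in those coordinates regardless of which triple is meant to be the smaller one, the argument is insensitive to the order of $i$ and $j$, and the observation follows immediately.
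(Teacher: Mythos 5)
Your argument is correct and is exactly the (implicit) reasoning behind the paper's observation: two triples agreeing in the first two coordinates could strictly increase in at most one coordinate, violating the defining property, and summing over the $n^2$ choices of $(x^*,y^*)$ gives the trivial bound $F(n)\leq n^2$. Nothing further is needed.
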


To improve upon this trivial bound $F(n) \leq n^2$, we uncover a nice
connection to the celebrated Ruzsa-Szemer\'edi induced matching problem by
using the sequence of triples to construct the following bipartite graph
$G$.  Let the vertex set be $\{a_1, \ldots, a_n, b_1, \ldots, b_n\}$.  For
each triple $(x, y, z)$, place an edge between $a_x$ and $b_y$, and label
it with $z$.

\begin{lemma}
  For each label $z \in \{1, \ldots, n\}$, the set $E_z$ of edges labeled
  $z$ forms an \textbf{induced matching}: the subgraph of $G$ (which
  includes all edges of all labels) induced by all vertices spanned by
  those edges of $E_z$ is a matching.
  \label{lem:triples=>IM}
\end{lemma}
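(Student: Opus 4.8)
The plan is to verify the two defining conditions of an induced matching separately, using throughout a single consequence of the sequence's property: if two \emph{distinct} triples of the sequence agree in one coordinate, then — since at least two coordinates must strictly increase from the earlier triple to the later one — \emph{both} of the remaining coordinates strictly increase from the earlier to the later. (By the same reasoning no two distinct triples can agree in two coordinates, which is just Observation~\ref{obs:projection}.)

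First I would show that $E_z$ is a matching. Two edges of $E_z$ arise from triples $(x, y_1, z)$ and $(x', y_1', z)$ that agree in the third coordinate, so along the sequence their first two coordinates are strictly monotone; in particular $x \neq x'$ and $y_1 \neq y_1'$, so the two edges share no endpoint.

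Next, the induced condition. Suppose toward a contradiction that $a_x b_y$ is an edge of $G$ with both $a_x, b_y \in V(E_z)$ but $a_x b_y \notin E_z$; say it comes from the triple $C = (x, y, z')$, where necessarily $z' \neq z$ (otherwise it would lie in $E_z$). Let $A = (x, y_1, z)$ be the triple whose $E_z$-edge covers $a_x$, and $B = (x_2, y, z)$ the triple whose $E_z$-edge covers $b_y$ (well-defined since $E_z$ is a matching). Since $z' \neq z$, the triple $C$ is distinct from both $A$ and $B$, and by the observation above each of the pairs $\{A,C\}$, $\{B,C\}$, $\{A,B\}$ agrees in exactly one coordinate — the first, second, and third, respectively. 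Now apply the shared-coordinate principle to all three pairs and split into cases according to where $C$ sits relative to $A$ and $B$ in the sequence. If $C$ precedes both $A$ and $B$, or follows both, then the $\{A,C\}$ and $\{B,C\}$ comparisons fix the signs of $z - z'$, of $x - x_2$, and of $y_1 - y$, and then the $\{A,B\}$ comparison is violated whichever way $A$ and $B$ are ordered; if instead $C$ lies between $A$ and $B$ in the sequence, the $\{A,C\}$ and $\{B,C\}$ comparisons force $z < z'$ and $z' < z$ simultaneously. Every case yields a contradiction, so no such chord exists and $E_z$ is an induced matching.

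The only real content is the final short case check, which is entirely mechanical once the shared-coordinate principle is isolated; I do not expect any genuine obstacle here, as the lemma is essentially a bookkeeping argument.
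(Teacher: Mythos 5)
Your proposal is correct and follows essentially the same route as the paper: reduce everything to the principle that two distinct triples sharing a coordinate must strictly increase in both remaining coordinates (the matching part), and then derive contradictory inequalities among the three triples $(x,y_1,z)$, $(x_2,y,z)$, $(x,y,z')$ for the induced part. The paper avoids your case split on the position of $C$ by a single WLOG on the order of the two $z$-labeled triples, but the underlying argument is the same and your case check does close correctly.
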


\begin{proof}
  Fix $z$.  By Observation \ref{obs:projection}, for each $x$, there is at
  most one triple of the form $(x, \star, z)$, and so each vertex $a_x$ is
  incident to at most one edge labeled $z$.  Similarly, each vertex $b_y$
  is incident to at most one edge labeled $z$.  Therefore, in the subgraph
  of $G$ induced by all vertices spanned by those edges of $E_z$, the edges
  of $E_z$ form a matching.
  
  It remains to show that no other edges (from other labels) are present in
  this induced subgraph.  Indeed, suppose for contradiction that vertices
  $a_x$ and $b_y$ are both incident to edges labeled $z$, but the edge $a_x
  b_y$ itself is labeled $z' \neq z$.  This means that the triple sequence
  contains the following triples, in some order (not necessarily the order
  presented):
  \begin{displaymath}
    (x, y', z)
  \end{displaymath}
  \begin{displaymath}
    (x', y, z)
  \end{displaymath}
  \begin{displaymath}
    (x, y, z')
  \end{displaymath}
  Without loss of generality, suppose that the first two triples come in
  the relative order presented.  Since both share the same third coordinate
  $z$, but must strictly increase in at least two coordinates, we have
  \begin{equation}
    \label{eq:get-IM}
    x < x' \quad \text{and} \quad y' < y .
  \end{equation}

  Similarly, since the first and third triples both agree in the first
  coordinate $x$, there must be a strict increase in both of the other two
  coordinates from one to the other.  Since we already have $y' < y$ from
  \eqref{eq:get-IM}, we must therefore also have $z < z'$.

  Finally, since the second and third triples both agree in the second
  coordinate $y$, there must be a strict increase in both of the other two
  coordinates from one to the other.  However, we already have $x' > x$ and
  $z < z'$, which are in opposite directions, which is a contradiction.
\end{proof}

We are now ready to prove our theorem.

\begin{proof}[Proof of Theorem \ref{thm:f-lower}]
  The induced matching property places us squarely in the context of
  Problem \ref{prb:RSz}, which studies $M(n, k)$, the maximum number of
  edges in an $n$-vertex graph whose edge set is the union of $k$ induced
  matchings.  Indeed, we immediately have $F(n) \leq M(2n, n)$, because our
  bipartite graph $G$ has exactly $F(n)$ edges.  The original result of
  Ruzsa and Szemer\'edi \cite{RuzSze} already breaks below the trivial
  quadratic bound, and the current best Triangle Removal Lemma bound of Fox
  \cite{Fox} gives $M(2n, n) \leq O( n^2 / e^{\log^* n} )$, which, in light
  of Lemma \ref{lem:fF-equiv}, proves our result.
\end{proof}

\subsection{Ruzsa-Szemer\'edi generalizations}
\label{sec:RSz-gen-ord}

It turns out that the bipartite graphs constructed from our triple
sequences have richer properties beyond the induced matching phenomena.
Indeed, the set of edges labeled with each fixed value $z$ forms an
``ordered'' induced matching, as illustrated in Figure
\ref{fig:ordered-matching}.

\begin{observation}
  \label{obs:ordered-matching}
  For each label $z \in \{1, \ldots, n\}$, the edges labeled $z$ are of the
  form $(a_{i_1}, b_{j_1})$, $(a_{i_2}, b_{j_2})$, \ldots, with $a_{i_1} <
  a_{i_2} < \cdots$ and $b_{j_1} < b_{j_2} < \cdots$.
\end{observation}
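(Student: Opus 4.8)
The plan is to derive this directly from the two-coordinate-increase hypothesis, reusing the computation already isolated as \eqref{eq:get-IM} in the proof of Lemma \ref{lem:triples=>IM}. First I would fix a label $z$ and take any two distinct edges of $G$ carrying that label, arising from triples $(x_1, y_1, z)$ and $(x_2, y_2, z)$. By Observation \ref{obs:projection} (equivalently, by the first paragraph of the proof of Lemma \ref{lem:triples=>IM}), these two triples have distinct first coordinates and distinct second coordinates, so $x_1 \neq x_2$ and $y_1 \neq y_2$. Without loss of generality, suppose the triple $(x_1, y_1, z)$ appears before $(x_2, y_2, z)$ in the sequence. Since the two triples agree in the third coordinate $z$, yet the sequence property demands a strict increase in at least two coordinates, the only possibility is that both the first and second coordinates strictly increase: $x_1 < x_2$ and $y_1 < y_2$. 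This is precisely \eqref{eq:get-IM}.

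Then I would observe that this says exactly that, among the edges labeled $z$, the one whose triple occurs earlier in the sequence has both the smaller $a$-index \emph{and} the smaller $b$-index. Hence ordering the $z$-labeled edges by their position in the sequence simultaneously orders them by increasing $a$-index and by increasing $b$-index; listing them as $(a_{i_1}, b_{j_1}), (a_{i_2}, b_{j_2}), \ldots$ in that order yields $a_{i_1} < a_{i_2} < \cdots$ and $b_{j_1} < b_{j_2} < \cdots$, as claimed. One can phrase the whole argument compactly as: restricted to a fixed label $z$, the order of the triples in the sequence, the order by their first coordinate, and the order by their second coordinate all coincide.

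There is no genuine obstacle here: the entire content is already present in the derivation of \eqref{eq:get-IM}, and the only point worth spelling out is that pinning down the third coordinate removes all freedom in which two coordinates are forced to increase. So the ``hard part'' is merely to notice that the pairwise increasing structure of the $z$-labeled edges, together with the absence of repeated $a$- or $b$-indices within label $z$, leaves no room for a crossing pair $(a_x, b_y)$, $(a_{x'}, b_{y'})$ with $x < x'$ and $y > y'$.
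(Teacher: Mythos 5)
Your proof is correct and matches the paper's (implicit) reasoning: the paper states this as an unproved observation, and the intended justification is exactly the computation you reuse from \eqref{eq:get-IM}, namely that two triples sharing the third coordinate $z$ must strictly increase in both remaining coordinates, so sequence order, $a$-index order, and $b$-index order coincide for the $z$-labeled edges. The appeal to Observation \ref{obs:projection} for distinctness is harmless but redundant, since the forced strict increases already give $x_1 < x_2$ and $y_1 < y_2$.
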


\begin{figure}[htpb]
  \begin{center}
    \includegraphics[height=2in]{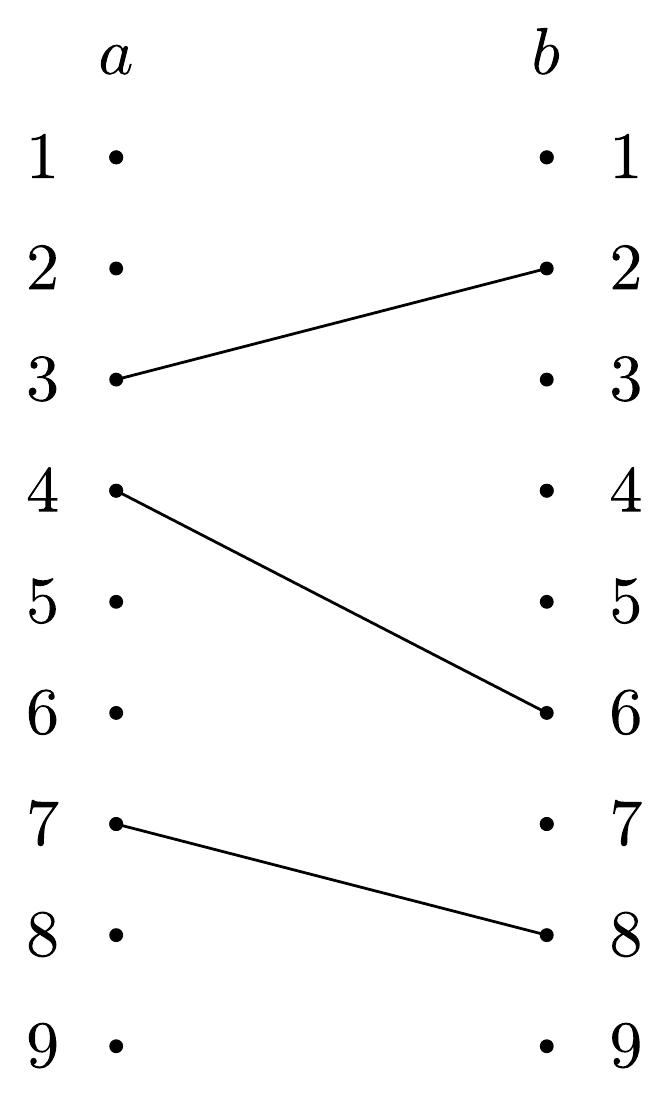}
  \end{center}
  \caption{Ordered matching.  Note that the matching edges do not cross
  each other.}
  \label{fig:ordered-matching}
\end{figure}

This additional ordering property alone is insufficient to significantly
improve the result, because Ruzsa and Szemer\'edi's lower bound
construction \cite{RuzSze}, based upon Behrend's \cite{Behrend, Elkin}
three-term arithmetic progression free sets, actually already produces
ordered induced matchings.  Their graph has about $n^2 / e^{\sqrt{\log n}}$
edges, so even if the additional order condition has an effect, it cannot
improve the exponent on $n$.  However, our collection of ordered induced
matchings still satisfies an additional condition, which we call
\emph{$\Sigma$-free} (illustrated in Figure \ref{fig:sigma-free}).

\begin{lemma}
  There are no vertices $b_h, a_i, b_j, a_k, b_l$ such that $i < k$ and $h
  \leq j \leq l$, edges $a_i b_h$ and $a_k b_l$ are both labeled $z$, and
  edges $a_i b_j$ and $a_k b_j$ are both in the graph (but may carry labels
  different from $z$).
  \label{lem:sigma-free}
\end{lemma}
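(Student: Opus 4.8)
The plan is to translate the forbidden configuration back into the language of the triple sequence of Problem~\ref{prb:triples} and to derive a contradiction from nothing more than the defining property (every earlier triple is strictly below every later triple in at least two coordinates). Suppose such vertices $b_h, a_i, b_j, a_k, b_l$ exist, with $i < k$ and $h \leq j \leq l$. Every edge $a_x b_y$ carries a single well-defined label by Observation~\ref{obs:projection}, so the hypotheses furnish four triples in the sequence: $P = (i, h, z)$ and $Q = (k, l, z)$ (the two edges labeled $z$), together with $R = (i, j, z_1)$ and $S = (k, j, z_2)$ for some labels $z_1, z_2$ (the edges $a_i b_j$ and $a_k b_j$). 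Note that the sequence contains no repeated triple, since a repetition would violate the two-coordinate increase between its two occurrences; in particular $R \neq S$ because $i \neq k$.

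First I would compare $R$ and $S$. They agree in the second coordinate, so whichever occurs earlier must be strictly smaller in the first and third coordinates; since $i < k$ rules out $S$ preceding $R$, it follows that $R$ precedes $S$ and hence $z_1 < z_2$. Next compare $P$ and $R$, which agree in the first coordinate. If $h < j$, then $R$ cannot precede $P$ (that would force $j < h$), so $P$ precedes $R$ and hence $z < z_1$; if $h = j$, then $P$ and $R$ agree in the first two coordinates, so by Observation~\ref{obs:projection} they are the same triple and $z_1 = z$. Either way $z \leq z_1$. Symmetrically, comparing $Q$ and $S$ (which agree in the first coordinate $k$) gives $z_2 \leq z$, with strict inequality when $j < l$ and equality when $j = l$. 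Chaining these three facts yields $z \leq z_1 < z_2 \leq z$, a contradiction.

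There is no deep obstacle here; the only care required is in the two boundary cases $h = j$ and $j = l$, where the auxiliary triples $R$ and $S$ degenerate to $P$ and $Q$ respectively. This is precisely why the statement is phrased with $h \leq j \leq l$ rather than strict inequalities, and the projection Observation~\ref{obs:projection} is exactly the tool that dispatches the degeneracy. It is worth noting that the argument uses neither the induced-matching property of Lemma~\ref{lem:triples=>IM} nor the non-crossing structure of Observation~\ref{obs:ordered-matching} directly: the bare defining property of the triple sequence already forces $\Sigma$-freeness.
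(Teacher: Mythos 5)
Your proof is correct and takes essentially the same route as the paper: translate the configuration into the four triples $(i,h,z)$, $(i,j,\cdot)$, $(k,j,\cdot)$, $(k,l,z)$ and chain label inequalities obtained by comparing pairs of triples that agree in one coordinate, using the two-coordinate-increase property. The only cosmetic difference is that the paper handles the boundary cases $h=j$ or $j=l$ by citing Lemma \ref{lem:triples=>IM} and then assuming $h$, $j$, $l$ distinct, whereas you absorb them with weak inequalities via Observation \ref{obs:projection}.
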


\noindent \textbf{Remark.} By choosing $j=l$ or $h=j$, we recover the
ordered induced matching condition, so this is stronger than the
traditional context of Ruzsa-Szemer\'edi.

\begin{figure}[htpb]
  \begin{center}
    \includegraphics[height=2in]{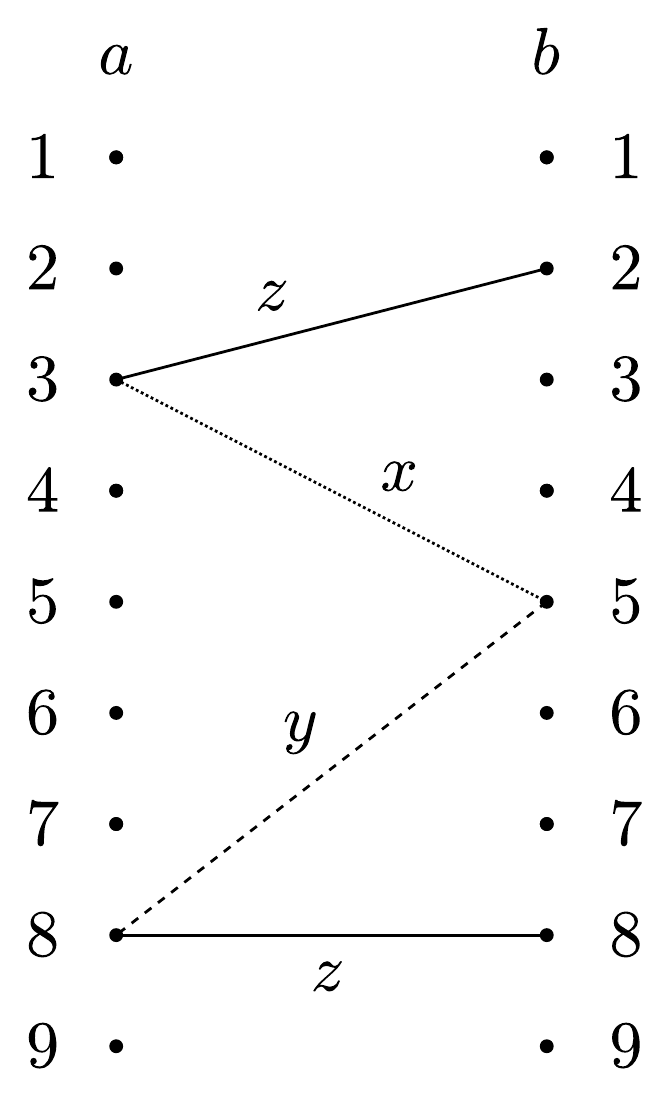}
  \end{center}
  \caption{Prohibited structure in $\Sigma$-free matchings.  The
    two outer edges both share the same label, but the connecting edges are
  differently labeled.}
  \label{fig:sigma-free}
\end{figure}

\begin{proof}[Proof of Lemma \ref{lem:sigma-free}]
  Lemma \ref{lem:triples=>IM} already established that the edge labels
  provide a partition into induced matchings, so we may assume that $h$,
  $j$, and $l$ are distinct.  Let $x$ be the label of the edge $a_i b_j$.
  We then have two triples $(i, h, z)$ and $(i, j, x)$ with $h < j$, so
  since one triple must increase in at least two coordinates, this forces
  $z < x$ as well.  Let $y$ be the label of the edge $a_k b_j$.  A similar
  argument considering $(i, j, x)$ and $(k, j, y)$ implies that $x < y$.
  However, the final edge $a_k b_l$ gives rise to $(k, l, z)$, and when we
  compare that triple to $(k, j, y)$, we find that they match in one
  coordinate, we already have deduced that $z < x < y$ and $j < l$, so
  neither can exceed the other in two coordinates.  This contradiction
  establishes the claim.
\end{proof}

\medskip

Lemma \ref{lem:sigma-free} now motivates Problem \ref{prb:RSz-gen} from the
introduction, which is a natural direction of generalization to the
Ruzsa-Szemer\'edi induced matching problem: rather than having two edges of
the same matching linked by a single edge of another matching, disallow
edges of the same matching to be linked by a path of length up to $l$
(possibly using edges of different labels).  Recall that we defined $M_l(n,
k)$ to be the maximum number of edges in a $n$-vertex bipartite graph whose
edge set is the union of $k$ matchings which are $l$-separated.

Lemma \ref{lem:sigma-free} actually provides an ``ordered'' condition which
is more restrictive than the forbidden structures in Problem
\ref{prb:RSz-gen}, so upper bounds on $M_l(2n, n)$ do not necessarily
transfer over to our original question.  However, due to its proximity to
the well-studied problem of Ruzsa and Szemer\'edi, Problem
\ref{prb:RSz-gen} is of independent interest.  We now proceed to prove the
sharp upper bound on $M_l(2n, n)$ for all $l \geq 2$.

\begin{proof}[Proof of Theorem \ref{thm:floppy-sigma}]
  We start by proving that $M_2(2n, n) \leq n^{3/2}$.  Suppose that we have
  a $(2n)$-vertex bipartite graph which is a disjoint union of 2-separated
  matchings $E_1, \ldots, E_n$.  Associate a set $S_v$ of indices $i$ to
  each vertex $v$, which records which $E_i$ contain edges incident to $v$.
  Now, if there is a vertex $v$ such that two distinct neighbors $x$ and
  $y$ are both incident to edges of the same $E_i$, then the 2-separation
  condition is violated.  (Here we use the assumption that the graph is
  bipartite: they cannot both be incident to the same edge.) Thus, for each
  vertex $w$, its neighbors have disjoint $S_v$.
  
  Since each $E_i$ is a matching, each vertex is incident to at most one
  edge of each $E_i$.  Therefore, for every vertex, the sum of the degrees
  of its neighbors is at most the number of matchings $n$; summing over all
  $2n$ vertices, the number of two-edge walks in the graph is at most
  $(2n)(n) = 2n^2$.  Since this is also precisely the sum of the squares of
  the degrees, convexity implies
  \begin{displaymath}
    2n^2 = \sum_v d_v^2 \geq (2n) (\overline{d})^2,
  \end{displaymath}
  where $\overline{d}$ is the average degree.  Therefore, the average
  degree is at most $\sqrt{n}$, which implies that there are at most
  $\frac{1}{2} (2n) \sqrt{n}$ edges, and $M_2(2n, n) \leq n^{3/2}$ as
  claimed.  Since $M_l(2n, n)$ is clearly monotonic in $l$, this then
  immediately implies that $M_l(2n, n) \leq n^{3/2}$ for all $l \geq 2$ as
  well.

  For the lower bound, we will prove that this bound is exactly sharp for
  every perfect square $n$ through an explicit construction.  Start with
  the disjoint union of $\sqrt{n}$ copies of the complete bipartite graph
  $K_{\sqrt{n}, \sqrt{n}}$.  Each component has exactly $n$ edges; assign
  one to each $E_i$.  Now, every pair of distinct edges from the same $E_i$
  actually comes from two different connected components, and are therefore
  at infinite distance in the graph.  Hence, for every perfect square $n$
  and every $l \geq 2$, we have $M_l(2n, n) = n^{3/2}$, which completes the
  proof.
\end{proof}

\medskip

\noindent \textbf{Remark.} If the bound in Theorem \ref{thm:floppy-sigma}
could be established in the weaker $\Sigma$-free setting, we would
immediately achieve $F(n) \leq n^{3/2}$, which by Lemma \ref{lem:fF-equiv}
would produce the sharp bound $f(n) \geq n^{2/3}$ on our original Ramsey
question on 1-color-avoiding paths.

\section{Concluding remarks}

This study has exposed many interesting problems, ranging from Ramsey
theory, to digraphs, and on to Regularity.  The original open problem is to
improve the Ramsey bounds for 1-color-avoiding paths in 3-edge-colorings of
transitive tournaments (Problem \ref{prb:3color-path}).  It is also
interesting to consider general tournaments.  There may also be interesting
phenomena when more than three colors are involved.  Indeed, there is a
whole spectrum of Ramsey-type problems where the notion of monochromatic
slides across the scale from 1-color to 1-color-avoiding.

There are also interesting new questions which border on the area of
Szemer\'edi's Regularity Lemma.  Our Theorem \ref{thm:floppy-sigma} handles
(unordered) $l$-separated matchings.  It would be interesting to prove the
analogue of the $M_2(2n, n)$ upper bound for $\Sigma$-free matchings (which
are ordered).  The $\Sigma$-free condition naturally generalizes to ordered
notions of $l$-separation between matchings, and indeed, our graphs satisfy
all properties along this successively richer hierarchy.  Any improvement
anywhere along this hierarchy would directly translate into improvements
for our problem.

Finally, it would be interesting to analyze the Ramsey tournaments produced
by well-chosen collections of permutations in the class of Construction
\ref{cons:ramsey}. If the transitive subtournaments are indeed small, this
could give rise to new alternative constructions in that area of study.

\section*{Acknowledgments}

The author would like to thank Jacob Fox for interesting discussions on the
topic of $k$-majority tournaments, which were the actual inspiration for
this investigation. He would also like to thank Dhruv Mubayi for helpful
conversations, including tracking down earlier references.  Finally,
several student staff members from the USA Math Olympiad Summer Program
were involved in early work regarding the formulation in Problem
\ref{prb:triples}, including Linus Hamilton, Jenny Iglesias, and Matt
Superdock.

\end{document}